\newcommand{\etype}[1]{\renewcommand{\labelenumi}{(#1{enumi})}}
\def\eroman{\etype{\roman}}
\newcommand{\C}{\mathbb{C}}
\def\a{\alpha}
\def\sl{\operatorname{sl}}
\def\gl{\operatorname{gl}}
\def\bfi{\bold{i}}
\def\bfj{\bold{j}}
\newcommand{\Image}{{\operatorname{Im}\,}}
\newcommand{\Char}{{\operatorname{Char}\,}}
\newcommand{\tr}{{\operatorname{tr}\,}}
\newcommand{\ssl}{{\operatorname{sl}}}
\newcommand{\GL}{{\operatorname{GL}}}
\newcommand{\ad}{{\operatorname{ad}}}
\newcommand{\PSL}{{\operatorname{PSL}}}
\newcommand{\SL}{{\operatorname{SL}}}
 \DeclareMathOperator{\End}{End}
\theoremstyle{definition}
\newtheorem{cor}{Corollary}
\newtheorem{rem}{Remark}
\newtheorem*{rem*}{Remark}
\newtheorem*{acknow*}{Acknowledgements}
\newtheorem*{examples*}{Examples}
\newtheorem{example}{Example}
\newtheorem{examples}{Example}
\theoremstyle{plain}
\newtheorem{lemma}{Lemma}
\newtheorem{prop}{Proposition}
\newtheorem{thm}{Theorem}
\newtheorem{conjecture}{Conjecture}
\newtheorem{question}{Question}
\newenvironment{proof-sketch}{\noindent{\bf Sketch of Proof}\hspace*{1em}}{\qed\bigskip}
\newenvironment{proof-idea}{\noindent{\bf Proof Idea}\hspace*{1em}}{\qed\bigskip}
\newenvironment{proof-of-lemma}[1]{\noindent{\bf Proof of Lemma #1}\hspace*{1em}}{\qed\bigskip}
\newenvironment{proof-of-prop}[1]{\noindent{\bf Proof of Proposition #1}\hspace*{1em}}{\qed\bigskip}
\newenvironment{proof-of-thm}[1]{\noindent{\bf Proof of Theorem #1}\hspace*{1em}}{\qed\bigskip}
\newenvironment{proof-attempt}{\noindent{\bf Proof Attempt}\hspace*{1em}}{\qed\bigskip}
\begin{document}

\title[Images of Lie polynomials]{The images of Lie
polynomials evaluated on matrices.}
\author{Alexei Kanel-Belov, Sergey Malev, Louis Rowen}

\address{Department of Mathematics, Bar Ilan University,
Ramat Gan, Israel} \email {beloval@math.biu.ac.il}
\address{Einstein Institute of Mathematics, Hebrew University of Jerusalem,
Jerusalem, Israel} \email {malevs@math.huji.ac.il}
\address{Department of Mathematics, Bar Ilan University,
Ramat Gan, Israel}
\email {rowen@math.biu.ac.il}

\thanks{We would like to thank Aner Shalev for interesting and fruitful discussions regarding this
paper}
\thanks{This research was partially supported by the Israel Science Foundation (grant no. 1207/12).}
\thanks{The second named author was partially supported by an ERC grant}
\thanks{The third author would like to thank the University of
Virginia for its support during the preparation of this revision}
\maketitle

\begin{abstract}
 Kaplansky asked about the possible images of a polynomial $f$ in several noncommuting
variables. In this paper we consider the case of $f$   a Lie
polynomial. We describe all the possible images of $f$ in $M_2(K)$
and provide an example of $f$ whose image is the set of
non-nilpotent trace zero matrices, together with 0.  We provide an
arithmetic criterion for this case. We also show that the standard
polynomial $s_k$ is not a Lie polynomial, for $k>2.$
\end{abstract}


\maketitle

\section{Introduction}

A \textbf{Lie polynomial} is an element of the free Lie algebra in
the alphabet $\{ x_i : i \in~I\},$ cf.~\cite[p.~8]{Ra}. In other
words, a Lie polynomial is a sum of Lie monomials $\alpha_j h_j,$
where $h_j$ is a Lie word, built inductively: each letter $x_i$ is a
Lie word of degree~1, and if $h_j, h_k$ are distinct Lie words of
degree $d_j$ and
 $d_k$,
then $[h_j, h_k]$ is a Lie word  of degree $d_j+d_k$.  As usual, a
Lie polynomial is \textbf{multilinear} if each letter appearing in
$f$ appears exactly once in each of its Lie monomials.

This note, which  consists of two parts,  is the continuation of
\cite{BMR1}, in which we considered the question, reputedly raised
by Kaplansky, of the possible image set $\Image f$ of a polynomial
$f$ on   the algebra $M_n(K)$ of $n\times n$ matrices over an
infinite field $K$ of characteristic 0 or prime to $n$. See
\cite{BMR1} for the historical background. Even the case of Lie
identities has room for further investigation. In the first part we
are interested in images of Lie polynomials on  $M_n(K)$, viewed as
a Lie algebra, and thus denoted as $\gl_n(K)$ (or just $\gl_n$ if
$K$ is understood).
 Since $[f,g]$ can be interpreted as $fg-gf$ in the free associative algebra,
  we identify any Lie polynomial with an associative polynomial;
hence. In this way, any set that can arise as the image of a Lie
polynomial on the Lie algebra $\gl_n$ also fits into the framework
of the associative theory of $M_n(K)$, and our challenge here is to
find examples of Lie polynomials that achieve the sets described in
\cite{BMR1,BMR2,BMR3}.

As we shall see, this task is not so easy as it may seem at first
glance. We first consider Lie identities, proving that the
\textbf{standard polynomial} $$s_k: = \sum _{\pi \in S_k}
\operatorname{sgn}(\pi) x_{\pi(1)}\cdots x_{\pi(k)}$$ is not a Lie
polynomial for $k>2$. Then we classify the possible images of Lie
polynomials evaluated on $2\times 2$ matrices and consider the
$3\times 3$ case, based on \cite{BMR1}, where the field $K$ was
required to be quadratically closed, and \cite{M}, where results
were provided over real closed fields, some of them holding more
generally over arbitrary fields.   A key role is played by $\sl_n$,
the Lie algebra of $n\times n$ matrices over $K$ having trace~0.

 In the second part we tie Lie polynomials to the word case.

\section{The images of homogeneous Lie polynomials on $\gl_n$  and $\sl_n$}

We refine Kaplansky's question to the Lie case, and ask:

\begin{question} What is   the possible image set $\Image f$ of a Lie polynomial $f$ on
$\gl_n$ and~$\sl_n$? \end{question}

\begin{question} For which Lie polynomials $f$ of minimal degree do we achieve this image set?
For example, what are the  Lie identities of smallest degree on
$\gl_n$ and~$\sl_n$?
\end{question}


  Even the case of Lie identities is nontrivial, although it has already been
 studied in two important books \cite{Bak,Ra}.
At the outset, the situation for Lie polynomials is subtler than for
regular polynomials, for the simple reason that the most prominent
polynomials in the theory, the standard polynomial $s_n$ and the
Capelli polynomial~$c_n$, turn out not to be
 Lie polynomials.

In order to pass to the associative theory, we make use of the
\textbf{adjoint algebra} $\ad L = \{ \ad_a : L \to L : a \in L\}$
given by $\ad_a (b) = [a,b].$ Note that $$\dim _K (\ad L) < \dim _K
\End _K (L) = (\dim _K L)^2.$$ Also, it is well-known that the map
$a \mapsto \ad_a$ defines a Lie  homomorphism $L \to \ad L$.

We write $ [a_1, \dots, a_t ]$ for  $[a_1, \dots,[a_{t-1},  a_t]],$
and $ [a^{(k)},  a_t  ]$ for $ [a, \dots, a, a_t ]$ where $a$ occurs
$k$ times. By \textbf{ad-monomial} we mean a term  $\alpha \, \ad
x_{i_1} \cdots \ad x_{i_t}$ for some $\alpha\in K$. By
\textbf{ad-polynomial} we mean a sum of ad-monomials.

\begin{rem}\label{corres}
$$\ad _{a_1}\cdots \ad_{a_t}(a) = [a_1, \dots, a_t ,a ].$$ In this way, any ad-monomial corresponds to a Lie monomial,
and thus any ad-polynomial $f(\ad _{x_1}, \dots, \ad _{x_t})$ gives
rise to a Lie polynomial $f( x_1 , \dots,  x_t, y)$ taking on the
same values, and in which $y$ appears of degree~1 in each Lie
monomial in the innermost set of Lie brackets.\end{rem}

Recall that an associative polynomial $f (x_1, \dots, x_k)$ is
\textbf{alternating in the last $m+1$ variables} if  $f$ becomes 0
whenever two of the last $m+1$ variables are specialized to the same
quantity. This yields:

\begin{prop}\label{spec0} Suppose $L$ is a Lie algebra of dimension
$m$, and $f (x_1, \dots, x_k)$ is a multilinear polynomial
alternating in the last $m+1$ variables. Then
$$f(\ad _{x_1}, \dots , \ad _{x_k})(y)$$ corresponds to a Lie
identity of $L$ of degree $\deg f +1$.
\end{prop}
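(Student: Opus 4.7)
The plan is to combine two ingredients: the standard linear-algebra observation that a multilinear polynomial alternating in a set of variables vanishes on any substitution in which those variables become linearly dependent, and the dimension bound $\dim_K \ad L \le \dim_K L = m$ coming from the fact that $\ad\colon L \to \End_K(L)$ is a Lie homomorphism (with kernel containing the center of $L$).

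First I would fix an arbitrary substitution $x_i \mapsto a_i \in L$ and pass into $\End_K(L)$ by considering $\ad_{a_1}, \dots, \ad_{a_k}$. Because $\ad L$ is the image of a homomorphism out of $L$, it is a $K$-subspace of $\End_K(L)$ of dimension at most $m$. Therefore, among the last $m+1$ of these operators, $\ad_{a_{k-m}}, \dots, \ad_{a_k}$, there must exist a nontrivial $K$-linear relation inside $\End_K(L)$.

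Next I would invoke the elementary fact that if $f(x_1, \dots, x_k)$ is multilinear and alternating in the last $m+1$ slots, then $f$ vanishes under any substitution in which those $m+1$ entries are linearly dependent. The verification is routine: solve for one entry as a $K$-linear combination of the others, expand by multilinearity, and observe that every resulting term has a repeated argument among the alternating slots and so vanishes by the alternating property. Applied inside the associative algebra $\End_K(L)$ with the operators $\ad_{a_i}$, this yields $f(\ad_{a_1}, \dots, \ad_{a_k}) = 0$ as an endomorphism of $L$; in particular it annihilates any $b \in L$ substituted for $y$.

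Finally, by Remark \ref{corres}, the ad-polynomial $f(\ad_{x_1}, \dots, \ad_{x_k})(y)$ corresponds to an honest Lie polynomial $\tilde f(x_1, \dots, x_k, y)$ of total degree $\deg f + 1$, taking the same values on $L$, and the previous step shows $\tilde f$ vanishes identically on $L$. There is no serious obstacle; the only point that requires care is that the linear-dependence conclusion is drawn inside $\End_K(L)$, where the dimension bound $\dim_K \ad L \le m$ applies, rather than inside $L$ itself where $m$ generic elements would of course be independent.
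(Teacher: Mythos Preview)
Your proposal is correct and follows essentially the same approach as the paper: the paper's proof simply cites \cite[Proposition~1.2.24]{Row} for the fact that a multilinear polynomial alternating in more variables than the dimension of the algebra vanishes on it, applies this to $\ad L$ (which has dimension at most $m$), and concludes; you have merely unpacked that citation explicitly and added the reference to Remark~\ref{corres} for the degree count.
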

\begin{proof} The alternating property implies
$f (x_1, \dots, x_k)$ vanishes on $\ad L$,
 cf.~\cite[Proposition~1.2.24]{Row},
 so every substitution of $f(\ad _{x_1}, \dots , \ad _{x_k})(y)$
vanishes.
\end{proof}

Since the alternating polynomial of smallest degree is the standard
polynomial~$s_{m+1}$, we have a Lie identity of degree $m+2$ for any
Lie algebra of dimension~$m$. In particular, $\dim (\sl_n) = n^2-1$,
yielding:

\begin{cor}\label{min1} $\sl_n$ satisfies a Lie identity of degree
$n^2+1.$
\end{cor}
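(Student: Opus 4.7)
The plan is to apply Proposition \ref{spec0} directly, with $L = \sl_n$ and $f$ chosen to be the standard polynomial of the appropriate degree. First I would record the standard fact that $\dim_K \sl_n = n^2 - 1$, since $\sl_n$ is the codimension-one subspace of $M_n(K)$ cut out by the trace. Setting $m = n^2 - 1$, the proposition guarantees that any multilinear polynomial $f(x_1, \dots, x_k)$ which is alternating in the last $m+1 = n^2$ variables produces a Lie identity of $\sl_n$ of degree $\deg f + 1$.

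Next I would take $f = s_{n^2}$, the standard polynomial in $n^2$ variables. This polynomial is multilinear of degree $n^2$ and alternating in all its variables (in particular in the last $n^2 = m+1$ of them), so the hypothesis of Proposition \ref{spec0} is met. Therefore
\[
s_{n^2}(\ad_{x_1}, \dots, \ad_{x_{n^2}})(y)
\]
is the ad-expression of a Lie identity for $\sl_n$ of degree $n^2 + 1$.

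There is essentially no obstacle here: the corollary is an immediate specialization of Proposition \ref{spec0} once the dimension of $\sl_n$ is inserted, using that $s_{m+1}$ realizes the minimal degree of an alternating multilinear polynomial. The only thing to be careful about is that the resulting Lie polynomial (guaranteed by Remark \ref{corres}) is genuinely of degree $n^2 + 1$ and not accidentally zero as a Lie expression; but this is built into the correspondence between ad-polynomials and Lie polynomials supplied by Remark \ref{corres}, since each ad-monomial in $s_{n^2}$ gives a nonzero Lie monomial of degree $n^2 + 1$ in the variables $x_1, \dots, x_{n^2}, y$.
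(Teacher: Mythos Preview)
Your proof is correct and follows exactly the paper's approach: apply Proposition~\ref{spec0} with $L=\sl_n$, $m=\dim\sl_n=n^2-1$, and $f=s_{n^2}$. Your closing remark about the resulting Lie polynomial being nonzero in the free Lie algebra is a fair point the paper glosses over, though your justification (each ad-monomial yields a nonzero Lie monomial) does not by itself rule out cancellation; the cleanest way to see nonvanishing is to expand in the free associative algebra and observe that the coefficient of $x_1 x_2\cdots x_{n^2} y$ is~$1$.
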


 Conversely, we have:

\begin{prop}\label{spec} Suppose $f(x_1, \dots, x_t, y)$ is a Lie
polynomial in which $y$ appears in degree 1 in each of its Lie
monomials. Then $f$ corresponds to an ad-polynomial taking on the
same values on $L$ as $f$.\end{prop}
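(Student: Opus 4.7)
The plan is to show this by induction on the degree of each Lie monomial in $f$, using linearity to reduce to the case of a single Lie monomial. Since every Lie monomial in $f$ contains exactly one occurrence of $y$, by the antisymmetry $[a,b] = -[b,a]$ we may rearrange each bracket so that $y$ lies inside the second slot at each level. The goal is then to push $y$ all the way to the innermost position, producing a Lie word of the form $[z_1,[z_2,\dots,[z_s,y]\dots]]$, which by Remark~\ref{corres} is exactly the value of an ad-monomial applied to~$y$.

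More precisely, I would argue as follows. Let $h$ be a Lie monomial of degree $d$ containing $y$ once. If $d=1$, then $h = y$, corresponding to the empty ad-monomial. Otherwise, write $h = [h_1,h_2]$, and after swapping if necessary, assume $h_2$ is the factor containing $y$; note $h_1$ is a Lie word purely in the $x_i$'s. By induction applied to $h_2$, there exist indices $i_1,\dots,i_s$ such that
\[
h_2 \;=\; \ad_{x_{i_1}}\cdots \ad_{x_{i_s}}(y).
\]
Consequently $h = [h_1,h_2] = \ad_{h_1}\bigl(\ad_{x_{i_1}}\cdots \ad_{x_{i_s}}(y)\bigr)$, and it remains to rewrite $\ad_{h_1}$ as a linear combination of compositions $\ad_{x_{j_1}}\cdots \ad_{x_{j_r}}$ in $\End_K L$.

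For this rewriting I would invoke the Lie homomorphism $a \mapsto \ad_a$ from $L$ into $\End_K L$ (where the latter carries its commutator Lie bracket). Applying this homomorphism to the Lie word $h_1$ replaces each letter $x_j$ by $\ad_{x_j}$ and each Lie bracket by the commutator $[A,B] = AB-BA$ in $\End_K L$; expanding every commutator then writes $\ad_{h_1}$ as a sum of associative products of the $\ad_{x_j}$'s, that is, as an ad-polynomial. Substituting this back yields $h$ as an ad-polynomial (applied to $y$), and summing over the Lie monomials of $f$ produces the desired ad-polynomial $f(\ad_{x_1},\dots,\ad_{x_t})$ taking the same values on $L$ as $f(x_1,\dots,x_t,y)$. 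The only subtle point — hardly an obstacle — is verifying that the expansion step is well defined, but this is immediate from the fact that $\ad$ is a Lie homomorphism, so that bracket relations in the free Lie algebra really do translate into commutator identities in the associative algebra $\End_K L$.
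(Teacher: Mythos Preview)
Your approach is essentially the paper's: reduce by linearity to a single Lie monomial $h$, induct on its degree, split $h=[h_1,h_2]$ with $y$ in $h_2$, and push $y$ to the innermost bracket so that Remark~\ref{corres} applies.

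One small correction: your inductive hypothesis, as stated, asserts that $h_2$ equals a \emph{single} ad-monomial $\ad_{x_{i_1}}\cdots\ad_{x_{i_s}}(y)$, but in general it is only a \emph{sum} of such terms (for instance $[[x_1,x_2],y]=\ad_{x_1}\ad_{x_2}(y)-\ad_{x_2}\ad_{x_1}(y)$). This does not damage the argument---your expansion of $\ad_{h_1}$ via the Lie homomorphism $a\mapsto\ad_a$ already produces sums, and the same device handles $h_2$---but you should phrase the induction hypothesis as ``$h_2$ corresponds to an ad-polynomial applied to $y$'' rather than ``an ad-monomial.'' With that adjustment the proof is correct, and your explicit use of the identity $\ad_{[a,b]}=[\ad_a,\ad_b]$ to rewrite $\ad_{h_1}$ as a linear combination of products $\ad_{x_{j_1}}\cdots\ad_{x_{j_r}}$ makes that step cleaner than in the paper.
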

\begin{proof} In view of Remark~\ref{corres}, it suffices to show
that any Lie monomial $h$ can be rewritten in the free Lie algebra
as a sum of Lie monomials in which $y$ appears (in degree 1) in the
innermost set of Lie brackets. This could be done directly by means
of the Jacobi identity, but here is a slicker argument.

Write $h = [h_1,h_2],$ and we appeal to induction on the degree of
$h$.   $y$ appears say in $h_2$. If $h_1 = y$ then we are done since
$h = -[h_2,y]$ corresponds to $-\ad _{h_2}.$  Likewise if $h_2 = y$.
In general, by induction, $h_1$ corresponds to some ad-monomial $\ad
_{x_{i_1}}\cdots \ad _{x_{i_k}}(y)$  and  $h_2$ corresponds to  some
ad-monomial $\ad _{x_{i_{k+1}}}\cdots \ad _{x_{i_\ell}}(y)$,
 so $[h_1,h_2]$ corresponds
to $\ad _{x_{i_1}}\cdots \ad _{x_{i_k}}((\ad _{x_{i_{k+1}}}\cdots
\ad _{x_{i_\ell}})(y)) =\ad _{x_{i_{1}}}\cdots \ad _{x_{i_\ell}}(y)$
as desired.
\end{proof}

\begin{cor}\label{mustbe} Any homogeneous Lie polynomial of degree $\ge 3$ must be
an identity (viewing the Lie commutator $[a,b]$ as $ab-ba$) of the
Grassmann algebra $G$.
\end{cor}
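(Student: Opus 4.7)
The key fact I would exploit is the well-known structural property of the Grassmann algebra $G = G_0 \oplus G_1$ (with $G_0$ the even part and $G_1$ the odd part): any commutator $[a,b] = ab - ba$ with $a,b \in G$ lies in $Z(G) = G_0$. Indeed, it suffices to check this on homogeneous elements; if either of $a,b$ is even then $ab = ba$ and $[a,b]=0$, while if both are odd then $ab = -ba$ so $[a,b] = 2ab \in G_0$, and $G_0$ is central in $G$. Consequently $[[a,b],c] = 0$ in $G$ for every $a,b,c \in G$.

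With this in hand, I would argue as follows. Since $f$ is homogeneous of degree $d \ge 3$, every Lie monomial $h$ appearing in $f$ has degree $d$. I would prove by induction on $d$ that each such monomial vanishes under every substitution in $G$. Write $h = [h_1, h_2]$ where $\deg h_1 + \deg h_2 = d \ge 3$, so at least one of $h_1, h_2$ has degree $\ge 2$, say $\deg h_1 \ge 2$. Then $h_1$ is itself a Lie bracket $[h_1', h_1'']$, and under any specialization of the variables to elements of $G$, the value of $h_1$ is a commutator in $G$, hence central by the preceding paragraph. Therefore $[h_1, h_2] = 0$ under that specialization, and summing over the Lie monomials of $f$ gives $f \equiv 0$ on $G$.

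I do not expect any real obstacle here: the content of the statement is entirely captured by the centrality of commutators in $G$, and the inductive bookkeeping is trivial. The one point worth stating explicitly is that, since $f$ is interpreted in the free associative algebra with $[a,b] = ab-ba$, an ``identity of $G$'' here simply means that the associative polynomial attached to $f$ vanishes on $G$, which is exactly what the argument above establishes.
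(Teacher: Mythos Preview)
Your proof is correct and follows the same idea as the paper's one-line argument, which simply observes that every Lie monomial of degree $\ge 3$ contains a triple commutator $[x_i,x_j,x_k]$, a well-known identity of~$G$. You have merely made explicit \emph{why} that triple commutator vanishes---namely $[G,G]\subseteq G_0 = Z(G)$---so the two approaches coincide.
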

\begin{proof} Each term includes $[x_i,x_j,x_k],$ which is well
known  to be an identity of~$G$.
 \end{proof}

%
%

Corollary~\ref{min1} gives rise to the following special case of
Question 2:

\begin{question} What is the minimal degree $m_n$ of a Lie identity of
$\sl_n$? \end{question}

By Corollary~\ref{min1}, $m_n \le n^2+1$, and in particular  $m_2
\le 5$. Even the answer $m_2 = 5$  given in \cite[Theorem~36.1]{Ra},
is not easy, although a reasonably fast combinatoric approach is
given in \cite[p.~165]{Bak}, where it is observed that any since any
Lie algebra $L$ satisfying a  Lie identity of degree $<5$ is
solvable, one must have $m_2 \ge 5$, yielding $m_2 = 5$.
\v{S}penko~\cite[Proposition 7.5]{S} looked at this from the other
direction and showed that if $p$ is a Lie polynomial of degree
 $\le 4$ then $\Image p=\ssl_2$.

\begin{examples}\label{usefulex}  $ $

\begin{enumerate}
\eroman \item The standard polynomial $s_2$ itself is a Lie
polynomial.

\item $s_4$ vanishes on $\sl_2$ (viewed inside the associative algebra $M_2(K)$),
since $\sl_2$ has dimension 3. But surprisingly, this is not the
polynomial of lowest degree vanishing on $\sl_2$, as we see next.

\item Bakhturin~\cite[Theorem~5.14]{Bak} points out that  $f = [(x_1x_2 +
x_2 x_1),x_3]$ vanishes on $\sl_2$. In other words, $a_1a_2 + a_2
a_1$ is scalar for any $2 \times 2$ matrices $a_1,a_2$ of trace 0.
Indeed, $a_i^2 $ is scalar for $i=1,2$, implying $a_1a_2 + a_2 a_1$
is scalar  unless $a_1, a_2$ are linearly independent, in which case
$a_1a_2 + a_2 a_1$ commutes with both $a_1 $ and $ a_2$, and thus
again is scalar. But $f$ is not a Lie polynomial, as seen via the
next lemma.
\end{enumerate}
\end{examples}

This discussion motivates us to ask when a polynomial is a Lie
polynomial. Here is a very easy criterion which is of some use.

\begin{lemma}\label{Liep1} Any Lie polynomial which vanishes on $\sl_n$ is an
identity of $\gl_n$.
\end{lemma}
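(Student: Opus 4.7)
The plan is to exploit the direct sum decomposition $\gl_n = \sl_n \oplus K\cdot I$ of vector spaces, where $I$ is central in the associative algebra (and hence lies in the center of the Lie algebra $\gl_n$). Given any tuple $a_1,\dots,a_k \in \gl_n$, I would write $a_i = b_i + c_i I$ with $b_i \in \sl_n$ and $c_i \in K$, and compare $f(a_1,\dots,a_k)$ with $f(b_1,\dots,b_k)$, which vanishes by hypothesis.

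First I would handle the Lie monomials of degree $\ge 2$. Since every such Lie monomial $h$ is obtained by iterated commutators, and $[I,\,\cdot\,] = 0$, an easy induction on the degree shows that the scalar summands $c_i I$ contribute nothing: $h(a_1,\dots,a_k) = h(b_1,\dots,b_k)$. Explicitly, if $h = [h_1,h_2]$ with $\deg h_1, \deg h_2 \ge 1$, then whenever some $h_j$ is a single variable $x_i$ the scalar part $c_i I$ commutes out of the outer bracket, and whenever $\deg h_j \ge 2$ the induction hypothesis applies directly. Thus the degree${}\ge 2$ part $f_{\ge 2}$ of $f$ satisfies $f_{\ge 2}(a_1,\dots,a_k) = f_{\ge 2}(b_1,\dots,b_k)$.

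Next I would dispose of the linear part $f_1 = \sum_i \alpha_i x_i$ of $f$. For a fixed $j$, substitute $x_j$ by any nonzero traceless matrix and $x_i$ by $0$ for $i\ne j$: every Lie monomial of degree $\ge 2$ vanishes (monomials in a single variable of degree $\ge 2$ are zero by antisymmetry, and any monomial involving another variable dies when that variable is set to $0$), so $f$ evaluates to $\alpha_j x_j$. Since this must be $0$ in $\sl_n$, we get $\alpha_j = 0$ for every $j$, i.e.\ $f_1 = 0$ and $f = f_{\ge 2}$.

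Combining the two steps, for arbitrary $a_i = b_i + c_i I \in \gl_n$,
\[
f(a_1,\dots,a_k) \;=\; f_{\ge 2}(a_1,\dots,a_k) \;=\; f_{\ge 2}(b_1,\dots,b_k) \;=\; f(b_1,\dots,b_k) \;=\; 0,
\]
so $f$ is an identity of $\gl_n$. The only subtlety, and the step one must take care with, is the degree-$1$ bookkeeping: a Lie polynomial need not be homogeneous, so one has to argue separately that its linear part must vanish; everything else follows from the centrality of $I$.
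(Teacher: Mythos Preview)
Your proof is correct and is essentially an expanded version of the paper's one-line argument, which reads in its entirety: ``Immediate, since $\gl_n' = \sl_n'$.'' Both arguments rest on the same fact, namely that (under the paper's standing hypothesis that $\operatorname{char} K$ is $0$ or prime to $n$) $\gl_n$ differs from $\sl_n$ only by the central summand $K\cdot I$, so Lie brackets ignore the scalar part; you have simply made this explicit and, in addition, carefully disposed of the degree-one terms, a point the paper leaves tacit.
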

\begin{proof} Immediate, since $\gl_n ' = \sl_n'.$
\end{proof}

The standard polynomial $s_4$   is not a Lie polynomial. Here are
three ways of seeing this basic fact.

  \begin{enumerate} \eroman \item Confront Example \ref{usefulex}
  with the fact that $m_2 = 5$, whereas $\deg s_2 = 4$.

\item A computational approach. We have 15 multilinear Lie
monomials of degree~4, namely $\frac{1}{2}\binom 42 = 3$ of the form
\begin{equation}\label{firsttype} [[x_{i_1}, x_{i_2}],[x_{i_3}, x_{i_4}]]\end{equation}
and $2 \binom 42 = 12$ of the form
\begin{equation}\label{secondtype}[[[x_{i_1}, x_{i_2}],x_{i_3}],
x_{i_4}].\end{equation} But
\begin{equation} [[x_{i_1}, x_{i_2}],[x_{i_3},
x_{i_4}]]  = \ad_{[x_{i_3}, x_{i_4}]} \ad _{x_{i_2}}
(x_{i_1})\end{equation}  \begin{equation*} \qquad \qquad  =
\ad_{x_{i_3}} \ad_{ x_{i_4}}\ad _{x_{i_2}}(x_{i_1})-  \ad_{ x_{i_4}}
\ad_{ x_{i_3}}\ad _{x_{i_2}}(x_{i_1}), \end{equation*} so we can
rewrite the equations \eqref{firsttype} in terms of
\eqref{secondtype}. Furthermore, with the help of the Jacobi
identity, \eqref{secondtype} can be reduced to seven independent Lie
monomials, and one can show that these do not span $s_4$. Even
though this might seem unduly complicated, it provides a general
program to verify that a given polynomial is not Lie.

\item The third approach is simpler  and works for $s_k$, for any $k>2$.

P.M.~Cohn was the first to tie the standard polynomial to the
infinite dimensional Grassmann algebra $G$ with base $e_1, e_2,
\dots$, by noting that $s_k(e_1, \dots, e_k) = k!e_1 \cdots e_k \ne
0$ when $k! \ne 0$.  Rosset~\cite{Ros} used $G$ to reprove the
Amitsur-Levitzki Theorem, and recent  interest has resurged in
studying standard identities via $G$, cf.~\cite{BrPS,P2,I} and also
in the context of Lie algebras~\cite{DPP}.

\end{enumerate}

\begin{thm} The standard polynomial $s_k$ is not a Lie polynomial,
for any $k>2$.
\end{thm}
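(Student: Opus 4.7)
The plan is to combine Corollary~\ref{mustbe} with the classical Grassmann algebra evaluation of $s_k$, which is exactly the third approach flagged just above the theorem statement. Since $s_k$ is multilinear of degree $k$, it is automatically homogeneous, so when $k>2$ Corollary~\ref{mustbe} applies: if $s_k$ were a Lie polynomial, then viewing the Lie commutator as $ab-ba$ would force $s_k$ to be an identity of the infinite dimensional Grassmann algebra $G$. Hence it suffices to exhibit a single substitution in $G$ on which $s_k$ does not vanish.

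The natural candidate is the tuple of anticommuting generators $(e_1,\dots,e_k)$ of $G$. The defining relation $e_ie_j = -e_je_i$ propagates to full permutations:
$$e_{\pi(1)} \cdots e_{\pi(k)} = \operatorname{sgn}(\pi)\, e_1 \cdots e_k \qquad (\pi \in S_k).$$
Substituting into $s_k$, the sign appearing in each summand is squared, so the $k!$ terms all collapse to the same product, giving
$$s_k(e_1,\dots,e_k) \;=\; \sum_{\pi\in S_k} \operatorname{sgn}(\pi)^{2}\, e_1\cdots e_k \;=\; k!\, e_1\cdots e_k.$$
Since $e_1 \cdots e_k$ is a nonzero basis element of $G$ and $k!$ is invertible in $K$ under the standing characteristic assumption (char $0$, or prime to the relevant integers), the right-hand side is nonzero. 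This contradicts Corollary~\ref{mustbe}, so $s_k$ is not a Lie polynomial.

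The argument is essentially mechanical once Corollary~\ref{mustbe} is available, so there is no genuine obstacle to overcome; the only conceptual point is that the hypothesis $k>2$ is precisely what is needed both to invoke Corollary~\ref{mustbe} (homogeneous degree $\ge 3$) and to be in a regime where the Grassmann computation actually detects $s_k$, bypassing the degenerate case $s_2 = [x_1,x_2]$ noted in Example~\ref{usefulex}(i).
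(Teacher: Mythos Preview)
Your argument is correct and matches the paper's own proof exactly: invoke Corollary~\ref{mustbe} to force $s_k$ to be an identity of $G$, then contradict this with Cohn's evaluation $s_k(e_1,\dots,e_k)=k!\,e_1\cdots e_k\neq 0$ in characteristic~$0$. The only cosmetic point is that your parenthetical about the characteristic is slightly loose; the paper simply takes $\Char K=0$, which suffices since the question of whether $s_k$ lies in the free Lie algebra is independent of the ground field.
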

\begin{proof} Otherwise, by Corollary~\ref{mustbe} it would be an
identity of $G$, contradicting Cohn's observation (taking $\Char K =
0$).
\end{proof}

\subsubsection {A strategy for computing Lie identities}
Lemma~\ref{Liep1} also can be used to determine when a Lie
polynomial $f$ is an identity of $\sl_n$. Indeed, this holds iff it
is a~PI of~$M_n(K)$, and thus of any central simple $K$-algebra, in
particular the symbol algebra $(\a, \beta)$, given by
\begin{equation}\label{symbol1}a^n = \a,\qquad b^n = \beta,\end{equation}
\begin{equation}\label{symbol3}ab = \rho ba\end{equation}
 where $\rho$ is a primitive $n$-root of 1.
This algebra is spanned by the base $\{ a^i b^j : 0 \le i, j <n\}$,
and in some ways this is a better test set for a Lie monomial than
the matrix units, because
\begin{equation}\label{symbolcom}
[a^ib^j, a^k b^\ell] = (\rho ^{jk}-\rho ^{i\ell})a^{i+k} b^{j+\ell}.
\end{equation}

Writing $\ad_{(i,j)}$ for $\ad_{a^ib^j}$ we thus have
$\ad_{(i,j)}(a^k b^\ell)= (\rho ^{jk}-\rho ^{i\ell})a^{i+k}
b^{j+\ell}.$

Let us iterate: Given $(i_1, j_1), \dots, (i_m, j_m)$ which we
denote as $(\bfi, \bfj),$ we also notate $\ad_{(\bfi, \bfj)}: =
\ad_{(i_m, j_m)}\cdots \ad_{(i_1, j_1)}$, and $\bar \bfi_m = \sum
_{u = 1}^m i^m,$ and $\bfj_m = \sum _{u = 1}^m j^m$. An easy
induction argument yields
$$\ad_{(i_m, j_m)} (a^k b^\ell) = \rho^{\prod _m (j_m(\bar j_{m-1}+k )- i_m (\bar i_{m-1}+\ell)}
a^{\bar i_{m}+k}b^{\bar j_{m}+\ell}.$$ Thus, writing a Lie
polynomial as
$$f = \sum \a _{(\bfi, \bfj)}\ad  _{(\bfi, \bfj)},$$
we need to solve $$ \sum \rho^{\prod _m (j_m(\bar j_{m-1}+k )- i_m
(\bar i_{m-1}+\ell)} a^{\bar i_{m}+k}b^{\bar j_{m}+\ell} = 0,$$
taken over $m!$ possible rearrangements  $(i_\pi(1), j_\pi(1)),
\dots, (i_\pi(m), j_\pi(m))$ of  $(\bfi, \bfj).$

In the generic case, we can take $\a, \beta$ to be commuting
indeterminates.
 Now writing a Lie polynomial  as $\sum _u c_u h_u$ for $c_u \in K$
and Lie monomials $h_u$, we evaluate all $c_u \ad_{(i_1,j_1)}\cdots
\ad_{(i_m,j_m)}$ on all $a^k b^\ell$, noting that there  are
$(n^2)^{m+1}$ equations to solve, and  we need the minimal $m$ such
that the rank of the coefficient matrix is less than the number of
variables. The nontrivial solutions provide the Lie identities. This
can be done on the computer for any given $n$, although we do not
have a result for general $n$.

\subsection {The case $n = 2$}


Recall from Corollary~\ref{min1} that there is a Lie identity of
degree~5.

\begin{thm}
\label{main} If $f$ is a homogeneous Lie polynomial evaluated on the
matrix ring $M_2(K)$, where $K$ is an algebraically closed field,
then $\Image f$ is either $\{0\}$, or  $K$ (the set of scalar
matrices), or the set of all non-nilpotent matrices having trace
zero, or $\ssl_2(K)$, or $M_2(K)$.
\end{thm}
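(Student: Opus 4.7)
The plan is to split on $\deg f$. If $\deg f = 1$, then $f$ is a $K$-linear combination of the variables, so its image is either $M_2(K)$ (if $f \not\equiv 0$) or $\{0\}$. The option $K$ (scalar matrices) does not actually arise for a nonzero Lie polynomial in characteristic not $2$, since every Lie polynomial of degree $\ge 2$ takes values in $[\gl_2,\gl_2]=\sl_2(K)$ and $\sl_2 \cap K\!\cdot\! I = \{0\}$; this possibility is listed in the statement for compatibility with the associative classification of \cite{BMR1}.

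For the main case $\deg f \ge 2$, I exploit two invariances of $\Image f \subseteq \sl_2(K)$. First, $P f(a_1,\ldots,a_k) P^{-1} = f(P a_1 P^{-1},\ldots,P a_k P^{-1})$ for every $P \in GL_2(K)$, so $\Image f$ is closed under $GL_2$-conjugation. Second, $f$ is homogeneous of some total degree $d$, so $f(\mu a_1,\ldots,\mu a_k) = \mu^d f(a_1,\ldots,a_k)$; since $K$ is algebraically closed, $\{\mu^d : \mu \in K\} = K$, and hence $\Image f$ is closed under scalar multiplication by $K$, and contains $0$. Using Cayley--Hamilton in the form $a^2 = -\det(a)\cdot I$ on $\sl_2$, every nonzero nilpotent trace-zero matrix is $GL_2$-conjugate to $E_{12}$, and every non-nilpotent trace-zero matrix is conjugate to $\operatorname{diag}(\lambda,-\lambda)$ with $\lambda^2 = -\det(a)$. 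Combined with scaling (which multiplies $\det$ by $\mu^2$, hence sweeps through all of $K^\times$), the orbits of $GL_2(K)\times K^\times$ on $\sl_2$ are exactly $\{0\}$, the nonzero nilpotent cone, and the non-nilpotent trace-zero set. Consequently $\Image f$ is a union of these three orbits containing $0$, leaving four a priori possibilities: $\{0\}$, $\{0\}\cup\{\text{nonzero nilpotent}\}$, $\{0\}\cup\{\text{non-nilpotent}\}$, or $\sl_2$.

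The main obstacle is excluding $\Image f = \{0\}\cup\{\text{nonzero nilpotent}\}$. If this held, then $f(a_1,\ldots,a_k)^2 = 0$ for all inputs (since $a\in\sl_2$ is nilpotent iff $a^2=0$), so the associative polynomial $f^2$ would be an identity of $M_2(K)$. But the ring of generic $2\times 2$ matrices $K\{X\}/T(M_2(K))$ is a noncommutative PI-domain, so $\bar f^2 = 0$ forces $\bar f = 0$; equivalently $f$ would itself be a polynomial identity of $M_2(K)$, contradicting that $\Image f$ contains a nonzero element. This leaves precisely the possibilities stated in the theorem.
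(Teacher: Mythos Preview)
Your argument is correct and self-contained when $\operatorname{char} K \ne 2$, and it follows a genuinely different route from the paper. The paper simply invokes the associative classification from \cite[Theorem~1]{BMR1} and observes that a Lie polynomial of degree $\ge 2$ has image contained in $\sl_2$, so the ``dense in $M_2$'' alternative forces $\deg f = 1$ and hence $\operatorname{Im} f = M_2(K)$. You instead carry out a direct $\GL_2\times K^\times$-orbit analysis on $\sl_2$ and exclude the purely nilpotent possibility via the domain property of the ring of generic $2\times 2$ matrices. Your approach is more elementary in that it does not rely on the machinery behind \cite{BMR1}; the paper's approach, in turn, places the result inside the broader associative classification.

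There is, however, a gap in characteristic $2$. Your claim that every non-nilpotent trace-zero matrix is conjugate to $\operatorname{diag}(\lambda,-\lambda)$ fails there: when $\operatorname{char} K = 2$ the characteristic polynomial $x^2+\det(a)$ has a double root, so for instance $I+e_{12}\in\sl_2$ is non-nilpotent but not diagonalizable. The $\GL_2\times K^\times$-orbits on $\sl_2\setminus\{0\}$ then split into three pieces rather than two (nonzero nilpotents, nonzero scalars $K^\times\!\cdot I$, and the blocks $\lambda I+e_{12}$ with $\lambda\ne 0$), and further unions must be ruled out. Correspondingly, your dismissal of the option $\operatorname{Im} f = K$ as listed merely ``for compatibility'' is not accurate: the Remark following the theorem and Example~\ref{badex} show that $\operatorname{Im} f = K$ genuinely occurs for $f=[[x,y],[z,t]]$ when $\operatorname{char} K = 2$. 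To complete the proof as stated you should either restrict to $\operatorname{char} K\ne 2$ or carry out the finer orbit analysis and exclusions in characteristic $2$.
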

\begin{rem}
The case of scalar matrices in Theorem \ref{main} is possible only
when $\Char K=2$, and the last case $M_2(K)$ is possible only if
$\deg f=1$.
\end{rem}
\begin{proof-of-thm}{\ref{main}}
According to  \cite[Theorem~1]{BMR1} the image of $f$ must be either
 $\{0\}$, or $K$, or the set of all non-nilpotent matrices having
trace zero, or $\ssl_2(K)$,  or  a dense subset of $M_2(K)$ (with
respect to Zariski topology). Note that if at least one matrix
having nonzero trace   belongs to the image of $f$ then $\deg f=1$
and thus $\Image f=M_2(K)$.
\end{proof-of-thm}
\begin{thm} For  any algebraically closed field $K$ of characteristic  $\ne 2,$
 the image of  any   Lie polynomial $f$ (not necessarily homogeneous) evaluated on $\ssl_2(K)$
  is either $\ssl_2(K)$, or $\{0\}$, or the set of trace zero non-nilpotent matrices.
\end{thm}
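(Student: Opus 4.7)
The plan is to exploit the conjugation $\SL_2(K)$-invariance of $\Image f$ and to reduce to analyzing the polynomial map $\det\circ f:\ssl_2(K)^n\to K$; the case where this map vanishes identically will then be handled by a degree-by-degree induction feeding the homogeneous components of $f$ into Theorem~\ref{main}. First one records that $\Image f\subseteq\ssl_2(K)$, that $0\in\Image f$ (since $f(0,\dots,0)=0$), and that $\Image f$ is stable under simultaneous $\SL_2(K)$-conjugation of the inputs. The $\SL_2(K)$-orbits on $\ssl_2(K)$ are $\{0\}$, the set $N$ of nonzero nilpotents, and, for each $d\in K^*$, the single orbit $\det^{-1}(d)$, so $\Image f$ is determined by the set of attained determinants together with whether $N\subseteq\Image f$. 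Since $\det\circ f$ is a polynomial function on an affine space over the algebraically closed field $K$, it is either constant or surjective.

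If $\det\circ f$ is surjective, then $\Image f$ meets every orbit $\det^{-1}(d)$ with $d\neq 0$, and by $\SL_2(K)$-invariance therefore contains every non-nilpotent traceless matrix; combined with $0\in\Image f$, this leaves only $\Image f=\ssl_2(K)$ or $\Image f=\{0\}\cup(\ssl_2(K)\setminus N)$, both of which are already among the claimed options. If instead $\det\circ f$ is the constant $\det(f(0))=0$, then every value of $f$ is nilpotent and the goal becomes $f\equiv 0$ on $\ssl_2(K)$. To prove this, decompose $f=\sum_{m\ge 1}f^{(m)}$ by total degree, let $Q=-\det$ be the quadratic form on $\ssl_2(K)$ with associated symmetric bilinear form $B$, and observe that for any fixed $a\in\ssl_2(K)^n$ and every $\lambda\in K$ the element $f(\lambda a)=\sum_m\lambda^m f^{(m)}(a)$ is nilpotent, so $Q(f(\lambda a))$ vanishes identically in $\lambda$; comparing coefficients yields, for each $k\ge 0$, the relation $\sum_{i+j=k}B(f^{(i)}(a),f^{(j)}(a))=0$.

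One then shows by induction on $m$ that $f^{(m)}\equiv 0$ on $\ssl_2(K)$. For $m=1$ the $k=2$ relation gives $Q(f^{(1)}(a))=0$ for all $a$; writing $f^{(1)}=\sum c_i x_i$, taking $a_i=0$ for $i\neq j$ and $a_j$ a non-nilpotent traceless matrix yields $f^{(1)}(a)=c_j a_j$, which must be nilpotent, forcing each $c_j=0$. For $m\ge 2$, the inductive hypothesis kills every cross-term in the $k=2m$ relation, leaving $Q(f^{(m)}(a))=0$, so $f^{(m)}$ is nilpotent-valued on $\ssl_2(K)$; Theorem~\ref{main} applied to the homogeneous Lie polynomial $f^{(m)}$ lists five possible images on $M_2(K)$, of which $K\cdot I$ is excluded by $\Char K\ne 2$, $M_2(K)$ by $\deg f^{(m)}\ge 2$, and among the remaining options $\{0\}$, the non-nilpotent traceless set, and $\ssl_2(K)$ only $\{0\}$ is contained in the nilpotent cone---so $f^{(m)}\equiv 0$ on $M_2(K)$, hence on $\ssl_2(K)$. (The passage from $M_2(K)$ to $\ssl_2(K)$ here uses that in characteristic $\ne 2$ every input decomposes as $\tfrac{\tr a}{2}I+b$ with $b\in\ssl_2$, the scalar summand being central and dropping out of every Lie monomial of degree $\ge 2$.) The main obstacle is precisely this constant case---ruling out $\Image f$ being the full nilpotent cone---and what makes the induction succeed is the triangular structure of the $Q$-expansion, whereby $Q(f^{(m)}(a))$ couples only to pairings $B(f^{(i)}(a),f^{(j)}(a))$ with $i+j=2m$: once lower-degree components are killed, the relation collapses to $Q(f^{(m)}(a))=0$, and Theorem~\ref{main} then forbids any nonzero homogeneous Lie polynomial of degree $\ge 2$ from being nilpotent-valued on $\ssl_2(K)$.
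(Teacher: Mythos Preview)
Your argument is correct. Both your proof and the paper's reduce to understanding the range of $\det\circ f$ on $\ssl_2(K)^m$ and then invoking conjugation invariance to fill out whole orbits. The paper, however, takes a shorter path to surjectivity of the determinant: assuming $f$ is not an identity, it (tacitly) discards those homogeneous components that are identities so that the top piece $f_d$ is not, picks a single substitution $a$ with $\det f_d(a)\neq 0$, and observes that $c\mapsto \det f(ca)$ is then a one-variable polynomial in $c$ with nonzero leading coefficient $\det f_d(a)$, hence surjective over the algebraically closed field $K$. The existence of such a substitution is exactly what Theorem~\ref{main} guarantees (a non-identity homogeneous Lie polynomial in characteristic $\neq 2$ has non-nilpotent values), though the paper does not cite it explicitly at that point.

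Your route---noting that a polynomial map $K^N\to K$ is either constant or surjective, and in the constant case peeling off homogeneous components via the quadratic form $Q=-\det$ and the relations $\sum_{i+j=k}B(f^{(i)}(a),f^{(j)}(a))=0$---is a genuinely different organization. It has the virtue of being fully explicit: you isolate the degenerate case $\det\circ f\equiv 0$ and show it forces each $f^{(m)}$ to be nilpotent-valued, then invoke Theorem~\ref{main} to kill it. The paper's one-variable trick is slicker but leans on the same fact about homogeneous pieces without saying so; your induction makes that dependence transparent, and your parenthetical on why evaluations on $M_2(K)$ and $\ssl_2(K)$ agree for Lie polynomials of degree $\ge 2$ fills in a step the paper leaves implicit.
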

\begin{proof} For $f$ not a PI,
 we can write $f=f_j+f_{j+1}+\dots+f_d$, where each $f_i$
  is a homogeneous Lie polynomial of degree $i$, and $f_d$ is not PI.
 Therefore for any $c\in K$ we have
 $$f(cx_1,cx_2,\dots,cx_m)=c^jf_j+\dots+c^df_d.$$
 Since $f_d$ is not $PI$, we can take specializations of $x_1,\dots, x_m$ for which $\det(f_d)\neq 0$.
Fixing these specializations, we consider
$\det(c^jf_j+\dots+c^df_d)$
 as a polynomial in $c$  of degree $j + \cdots + d$.
Since the leading coefficient is not zero and $K$ is algebraically
closed, its image is $K$.
 Thus for any $k\in K$ there exist $x_1,\dots,x_m$ for which $\det(f)=k$.
 Hence (for $\Char K\neq 2$) any matrix with  nonzero eigenvalues $\lambda$ and~$-\lambda$ belongs to $\Image f$.
 Therefore $\Image f$ is either $\ssl_2$ or the set of trace zero non-nilpotent matrices.

\end{proof}

Let us give examples of Lie polynomials having such images:
\begin{example}\label{badex}
\
If $\Char K=2$, then   $\Image f = K$ also is possible:
We take
$$f(x,y,z,t)=[[x,y],[z,t]].$$
Any value of $f$ is the Lie product of two trace zero matrices
$s_1=[x,y]$ and $s_2=[z,t]$. Both can be written as
$s_i=h_i+u_i+v_i$, where the $h_i$ are diagonal trace zero
matrices (which are scalar since $\Char K=2$), the $u_i$ are
proportional to $e_{12}$, and the $v_i$ are proportional to
$e_{21}$. Thus $[s_1,s_2]=[u_1,v_2]+[u_2,v_1]$ is scalar.

 Over an arbitrary field,  $\Image f$
can indeed be equal to $\{0\}$, or $K$, or the set of all
non-nilpotent matrices having trace zero, or $\ssl_2(K)$, or
$M_2(K)$.

\begin{enumerate} \eroman \item $\Image x=M_2(K)$.

\item  $\Image [x,y]=
\ssl_2$.

\item Next, we construct  a Lie polynomial whose image evaluated on
$\ssl_2(K)$ is the set of all non-nilpotent matrices having trace
zero. We take the multilinear polynomial $h(u_1,\dots,u_8)$
constructed in \cite{DK2} by Drensky and Kasparian which is central
on $3\times 3$ matrices. Given    $2\times 2$ matrices $x_1,\dots,
x_9$ we consider the homogeneous Lie polynomial
$$f(x_1,\dots,x_9)=h(\ad_{ [x_9 ,x_9 ,\dots, x_9,x_1]},\ad_{x_2},\ad_{x_3},\dots,\ad_{x_8})(x_9).$$
For any $2\times 2$ matrix $x$, $\ad_x$ is a $3\times 3$ matrix
since $\ssl_2$ is $3$-dimensional; hence, for any values of $x_i$,
the value of $f$ has to be proportional to $x_9$. However for $x_9$
nilpotent, this must  be zero, since $[x^{(3)},y]=0$ for any
$y\in\ssl_2(K)$ if $x$ is nilpotent. (When we open the brackets we
have the sum of $8$ terms and each term equals $x^kyx^{3-k}$. But
for any integer $k$, either $k\geq 2$ or $3-k\geq 2$.) Thus the
image of $f$ is exactly the set of non-nilpotent trace zero
matrices.

Another example of a homogeneous Lie polynomial with no nilpotent
values is $f(x,y)=[[[x,y],x],[[x,y],y]]$. (See
\cite[Example~4.9]{BGKP} for details.)

\end{enumerate}

\end{example}

\subsection {The case $n = 3$}

New questions arise concerning the possible evaluation of Lie
polynomials on $M_n(K)$.

According to \cite[Theorem 3]{BMR2}, if $p$ is a homogeneous polynomial with trace vanishing image, then
$\Image p$ is one of
the following:

\begin{itemize}
\item \{0\}, \item  the set of scalar matrices (which can occur
only if $\Char K=3$),
 \item a dense subset of  $\sl_3(K)$, or \item
the set of $3$-scalar matrices, i.e., with
eigenvalues $(c,c\omega,c\omega^2)$,
where $\omega$ is our cube root of $1$.
\end{itemize}

Drensky~and Rashkova~\cite{DR} have found several  identities of
$\sl_3$ of degree 6, but they cannot be Lie polynomials, since
otherwise they would be identities of $\gl_3$ and thus a multiple of
$s_6,$ which is not a Lie polynomial. Thus, one must go to higher
degree.

In the associative case, the fact that the generic division algebra
has a $3$-central element implies that there is a homogeneous
$3$-central polynomial $f$ for $M_3(K),$ i.e., all of whose values
take on eigenvalues $c, \omega c, c\omega ^2$, where $\omega$ is a
cube root of $1$. But any matrix with these eigenvalues is either
scalar or has trace $0$. This leads us to the basic questions needed
to complete the case $n=3$:

\begin{question} Is there a Lie polynomial $f$ whose values are dense on
$\sl_3 (\mathbb C)$ but do not  take on all  values?
\end{question}

\begin{question} Is there a Lie polynomial $f$ whose values on
$\sl_3$ all take on eigenvalues $c, \omega c, c\omega ^2$, where
$\omega$ is a primitive cube root of $1$?
\end{question}

\subsection{A Group theoretical question and its relation to the Lie theoretical problem}
\

Let $w$ be an element of the free group of $m$ letters
$x_1,x_2,\dots x_{m-1}$ and $x_m$. Given a group $G$, we consider
the map $f_{w, G}:G^m \rightarrow G$ corresponding to the word $w$.
This map is called a {\it word map}, which for convenience we also
notate as $w$ instead of $f_{w, G}$. There is a group conjecture
(see \cite[Question~2]{KBKP} for the more general case):

\begin{conjecture}\label{word-PSL} If the field $K$ is algebraically closed
of characteristic $0$, then the image of any nontrivial group word
$w(x_1,\dots,x_m)$ on the projective linear group $\PSL_2(K)$  is
$\PSL_2(K)$.
\end{conjecture}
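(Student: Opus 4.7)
The plan is to combine Borel's dominance theorem with the Lie-polynomial analysis of the preceding subsections, reducing surjectivity of the word map on $\PSL_2(K)$ to the already-established description of images of Lie polynomials on $\ssl_2(K)$. First I would apply Borel's theorem: for any nontrivial word $w$, the induced morphism $w\colon \PSL_2(K)^m \to \PSL_2(K)$ is dominant, so $\Image w$ is a Zariski-dense constructible set containing an open dense $U \subseteq \PSL_2(K)$. Since $\Image w$ is conjugation-invariant and nontrivial semisimple conjugacy classes in $\PSL_2(K)$ are parametrized by $\tr^2 \in K$, composition with trace yields a dominant morphism $\tr \circ \tilde{w}\colon \SL_2(K)^m \to K$ whose image is cofinite in $K$, so $\Image w$ already contains every regular semisimple class up to finitely many exceptional trace values.

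Second, I would linearize $w$ near $I$ via Baker--Campbell--Hausdorff: substituting $x_i = \exp(tX_i)$ with $X_i \in \ssl_2(K)$,
\[
  \log w(\exp(tX_1),\dots,\exp(tX_m)) = t^k f(X_1,\dots,X_m) + O(t^{k+1}),
\]
where the leading coefficient $f$ is a homogeneous Lie polynomial, nonzero in the free Lie algebra by nontriviality of $w$ (Magnus embedding). Theorem~\ref{main} then pins down $\Image f$ on $\ssl_2(K)$: it is either all of $\ssl_2(K)$ or the set of non-nilpotent trace-zero matrices. Exponentiating and scaling $t \in K^\times$, together with conjugation-invariance of $\Image w$, this covers every regular semisimple class in $\PSL_2(K)$, closing the gap left by the first step.

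It remains to reach the unipotent classes and the identity. If $\Image f = \ssl_2(K)$, nilpotent $X \in \ssl_2(K)$ yield unipotent $\exp(tX) \in \Image w$, and $I$ follows by specializing to commuting preimages. The subtle case is when $\Image f$ omits nilpotents: one must then exploit the next nonzero term $f_{k+1}$ of the BCH expansion, specializing the $X_i$ so that $f(X_1,\dots,X_m)=0$ but $f_{k+1}(X_1,\dots,X_m)$ is a nonzero nilpotent. The main obstacle will be exactly this step --- there is no \emph{a priori} guarantee that the higher-order BCH coefficients, viewed as Lie polynomials, jointly hit the nilpotent cone and $0$ once the leading one fails to do so. A uniform resolution will likely require finer control of the Magnus expansion than Theorem~\ref{main} alone provides, which is why the conjecture remains open in general and has so far been verified (Bandman--Garion--Grunewald, Larsen--Shalev, and others) only for restricted families of words.
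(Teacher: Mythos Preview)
The statement you were asked to prove is labelled a \emph{conjecture} in the paper, and the paper does not prove it; it only records the partial result (attributed to Liebeck--Nikolov--Shalev) that $\Image w$ contains every non-unipotent element of $\PSL_2(K)$, and then explains why the unipotent case remains open. So there is no ``paper's own proof'' to compare against, and your proposal is not a proof either --- as you yourself acknowledge in the final paragraph.

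That said, your outline tracks the paper's discussion fairly closely. Both use Borel's dominance theorem plus a trace argument to capture the semisimple classes; the paper's version is slightly sharper, arguing that $\tr w$ is a homogeneous rational function on an algebraically closed field and hence genuinely surjective onto $K$, not merely cofinite as you write. For the unipotent classes, the paper linearizes by the substitution $x_i = 1 + y_i$ and expands $w(1+y_1,\dots,1+y_m) = 1 + f(y_1,\dots,y_m) + g(y_1,\dots,y_m)$ with $f$ a homogeneous Lie polynomial, whereas you use the Baker--Campbell--Hausdorff expansion of $w(\exp(tX_1),\dots,\exp(tX_m))$; these are two packagings of the same idea (Magnus/Malcev versus exponential coordinates). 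Both arrive at the same obstacle: the leading Lie polynomial $f$ may have image equal to the non-nilpotent part of $\ssl_2(K)$ (the paper points explicitly to Example~\ref{badex}), and then one has no control over whether the higher-order terms supply the missing nilpotent values. Your diagnosis of the gap is therefore correct and matches the paper's; what you have written is a faithful sketch of the state of the problem, not a proof of the conjecture.
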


\begin{rem} Note that if one takes the group $\SL_2$ instead of $\PSL_2$, Conjecture~\ref{word-PSL}
fails, since the matrix $-I+e_{12}$ does not belong to the image of
the word map $w=x^2.$
\end{rem}

\begin{examples}
When $\Char K=p>0$, the image of the word map $w(x)=x^p$ evaluated
on $\PSL_2(K)$ is not $\PSL_2(K)$. Indeed, otherwise the matrix
$I+e_{12}$ could be written as $x^p$ for $x\in\PSL_2(K)$.
 If the eigenvalues of $x$ are equal, then $x=I+n$ where $n$ is nilpotent. Therefore $x^p=(I+n)^p=I+pn=I.$
 If the eigenvalues of $x$ are not equal, then $x$ is diagonalizable and therefore $x^p$ is also diagonalizable, a contradiction.
\end{examples}

%

\begin{lemma}[Liebeck, Nikolov, Shalev, cf.~also \cite{G} and
\cite{Ban}] $\Image w$ contains all matrices from $\PSL_2(K)$ which
are not unipotent.
\end{lemma}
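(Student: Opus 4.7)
The plan is to reduce the lemma to a statement about traces of word maps on $\SL_2(K)$, and then to deduce surjectivity from non-constancy of the trace polynomial. First I would lift the question to $\SL_2(K)$: an element $g\in\PSL_2(K)$ is non-unipotent iff it is the identity class, or else any lift $\tilde g\in\SL_2(K)$ is semisimple with $\tr\tilde g=\tau$ satisfying $\tau^2\ne 4$. The identity class is trivially in $\Image w$ (set all $a_i=I$, since $w(I,\dots,I)=I$). For the semisimple case the $\PSL_2(K)$-conjugacy class of $g$ is determined by $\tau^2$, so it suffices to produce $a_1,\dots,a_m\in\SL_2(K)$ with $\tr w(a_1,\dots,a_m)=\pm\tau$; a subsequent conjugation of the whole tuple inside $\SL_2(K)$ then transports the value into the class of $g$.

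Next, consider the regular morphism $\Phi_w:\SL_2(K)^m\to\mathbb A^1_K$ defined by $\Phi_w(a_1,\dots,a_m)=\tr w(a_1,\dots,a_m)$ on the irreducible affine variety $\SL_2(K)^m$. The crucial step is to show that $\Phi_w$ is non-constant whenever $w\ne 1$ in $F_m$. Once this is proved, surjectivity of $\Phi_w$ onto $K$ is automatic: the character group of $\SL_2^m$ is trivial, so by Rosenlicht's theorem every unit of the coordinate ring $\mathcal O(\SL_2(K)^m)$ is a scalar; hence for each $c\in K$ the non-constant regular function $\Phi_w-c$ is a non-zero non-unit and must vanish somewhere on $\SL_2(K)^m$ by the Nullstellensatz. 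Combined with the reduction above, this yields $g\in\Image w$ for every non-unipotent $g\in\PSL_2(K)$.

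For the non-constancy of $\Phi_w$, the easy case is when the abelianization of $w$ is non-zero: if the exponent sum of $x_1$ in $w$ equals $d\ne 0$, I would specialize $x_1=\diag(\lambda,\lambda^{-1})$ and $x_i=I$ for $i>1$ to obtain $\tr w=\lambda^d+\lambda^{-d}$, visibly non-constant in $\lambda$. The hard case is $w\in[F_m,F_m]$, for which I would appeal to the Fricke-Vogt-Horowitz theory of $\SL_2$-character varieties: $\tr w$ expands as a polynomial in the Fricke trace coordinates $\tr(x_{i_1}\cdots x_{i_k})$, and this polynomial vanishes identically on the $\SL_2$-character variety of $F_m$ iff $w=1$ in $F_m$. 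Equivalently, since $F_m$ embeds as a Zariski-dense free subgroup of $\SL_2(K)$, constancy $\tr w\equiv 2$ along a one-parameter deformation of a faithful embedding would force $w$ to take only unipotent values on a whole open set of the representation variety, which is incompatible with the infinitesimal richness of that variety.

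I expect this non-constancy step to be the main obstacle. The naive abelianization argument covers only words outside $[F_m,F_m]$, and for commutator-type words one must bring in either Horowitz's classical trace identities or an explicit deformation of a faithful $\SL_2(K)$-representation of $F_m$; the statement $\tr w\not\equiv 2$ is easy to believe but not purely formal. Everything else in the proof — the $\PSL_2$-to-$\SL_2$ lift, the fact that the trace determines the semisimple conjugacy class, and the passage from non-constancy to surjectivity via Rosenlicht plus the Nullstellensatz — is essentially bookkeeping once the key non-constancy input is in hand.
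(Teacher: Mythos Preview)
Your argument is correct and arrives at the same conclusion, but the route differs from the paper's in two places.

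For the non-constancy of $\tr w$, the paper simply invokes Borel's theorem that the image of a nontrivial word map on $\SL_2(K)$ is Zariski dense; density of the image in $\SL_2(K)$ immediately forces $\tr w$ to have Zariski-dense image in $K$, hence to be non-constant. You instead argue this point from scratch via the Fricke--Vogt--Horowitz description of the $\SL_2$ character variety (the statement $\tr w\equiv 2\Rightarrow w=1$ is indeed a consequence of Horowitz's theorem). Borel's theorem is a clean black box and makes the proof a few lines long; your approach is more self-contained for $\SL_2$ but brings in heavier machinery for a step the paper dispatches by citation.

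For the passage from non-constancy to surjectivity onto $K$, the paper asserts that $\tr w$ is a ``homogeneous rational function'' and that $K$ is algebraically closed, and concludes $\Image(\tr w)=K$. Your Rosenlicht/Nullstellensatz argument (the coordinate ring of $\SL_2^m$ has only scalar units, so every non-constant regular function vanishes somewhere) is a cleaner and more transparent justification of the same step.

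The remaining reductions --- that the identity is always in the image, that a semisimple class in $\PSL_2(K)$ is determined by $\tau^2$, and that hitting the right trace value suffices after a conjugation --- are handled identically in both arguments.
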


\begin{proof}
According to \cite{B} the image of the word map $w$ must be Zariski
dense in $\SL_2(K)$. Therefore the image of $\tr w$ must be Zariski
dense in $K$. Note that $\tr w$ is a homogeneous rational function
and $K$ is algebraically closed. Hence, $\Image (\tr w)=K.$ For any
$\lambda\neq\pm 1$ any matrix with eigenvalues $\lambda$ and
$\lambda^{-1}$ belongs to the image of $w$ since there is a matrix
with trace $\lambda+\lambda^{-1}$ in $\Image w$ and any two matrices
from $\SL_2$ with equal trace (except trace $\pm 2$) are similar.
Note that the identity matrix $I$ belongs to the image of any word
map.
\end{proof}

However the question whether one of the matrices $(I+e_{12})$ or
$(-I-e_{12})$ (which are equal in $\PSL_2$) belongs to the image of
$w$ remains  open. We conjecture that $I+e_{12}$ must belong to
$\Image w$. Note that if there exists $i$ such that the degree of
$x_i$ in $w$ is $k\neq 0$ then we can consider all $x_j=I$ for
$j\neq i$ and $x_i=I+e_{12}$. Then the value of $w$ is
$(I+e_{12})^k=I+ke_{12}$ and this is a unipotent matrix since $\Char
K=0$, and thus $\Image w=\PSL_2(K)$. Therefore it is interesting to
consider word maps $w(x_1,\dots,x_m)$ such that the degree of each
$x_i$ is zero.

This is why   Conjecture \ref{word-PSL} can be reformulated as
follows:

\begin{conjecture}\label{word-PGL}
Let $w(x_1,\dots,x_m)$ be a group word whose degree at each $x_i$ is
$0$. Then the image of $w$ on $G=\GL_2(K)/\{\pm 1\}$ must be
$\PSL_2(K)$.
\end{conjecture}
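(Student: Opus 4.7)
The plan is to leverage the Liebeck--Nikolov--Shalev lemma cited just above, which already places every non-unipotent element of $\PSL_2(K)$ inside $\Image w$. Under the degree-zero hypothesis one has $\det w(x_1,\dots,x_m) = \prod_i \det(x_i)^{d_i(w)} = 1$ for every substitution, so $\Image w$ is automatically contained in $\PSL_2(K) \subseteq \GL_2(K)/\{\pm I\}$. It therefore suffices to exhibit one substitution $(x_1,\dots,x_m)$ for which $w(x_1,\dots,x_m)$ is a non-trivial unipotent element of $\PSL_2(K)$, since $\Image w$ is closed under conjugation and all non-trivial unipotents in $\PSL_2(K)$ form a single conjugacy class.

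First I would invoke Borel's dominance theorem for word maps on semisimple algebraic groups (cited in the paper as \cite{B}) to conclude that $w\colon \PSL_2(K)^m \to \PSL_2(K)$ is dominant, so $\Image w$ is Zariski-dense and the trace function $(x_1,\dots,x_m)\mapsto \tr w(x_1,\dots,x_m)$ takes every value in $K$. I would then set up a one-parameter degeneration, starting from a substitution whose $w$-value has trace $\lambda_0\neq\pm 2$ and moving along a curve in $\SL_2(K)^m$ so that the trace approaches $2$. The limiting value lies in the trace-$2$ locus of $\PSL_2(K)$, which is the disjoint union of $\{I\}$ and the non-trivial unipotent class; the aim is to engineer the family so that the limit falls into the latter.

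The decisive step is ruling out the case where $w$ systematically collapses to $I$ at the boundary of such a family. Geometrically one needs that $Z := w^{-1}(I) \subseteq \PSL_2(K)^m$ is a proper closed subvariety whose projection to the trace coordinate does not cover a neighbourhood of $2$. A Magnus-style expansion of $w$ near the identity --- writing $x_i = I + tA_i$ with generic $A_i \in \sl_2(K)$ and expanding $w$ in powers of $t$ --- should produce, as the leading non-trivial term of $w - I$, a nonzero Lie polynomial in the $A_i$ with values in $\sl_2(K)$; crucially, the hypothesis that every $x_i$ has total degree zero in $w$ forces the linear part $\sum_i d_i(w) A_i$ to vanish, so that the leading term automatically lies in the commutator Lie ideal of the free Lie algebra on the $A_i$. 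Evaluated on generic $A_i$, this Lie polynomial should be a non-zero nilpotent matrix, yielding a genuinely unipotent value for $w$ at first order in $t$.

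The hard part will be promoting this infinitesimal picture to a bona fide $K$-rational substitution realising a non-trivial unipotent: one must show that the leading Lie-polynomial obstruction is not cancelled by higher-order terms in the Magnus expansion, or equivalently that at a generic tangent direction $(A_1,\dots,A_m)$ the curve $t\mapsto w(I+tA_1,\dots,I+tA_m)$ exits the identity locus transversally into the unipotent locus rather than being tangent to $\{I\}$ to all orders. Controlling this transversality for arbitrary nontrivial $w$ is exactly the obstacle that keeps Conjecture~\ref{word-PSL} open, and a complete proof would likely require either reducing $w$ to a short list of normal forms in the free group (generalising the commutator and Engel-word cases of Bandman and others) or a much finer geometric analysis of the representation variety of the free group on $m$ letters into $\SL_2(K)$.
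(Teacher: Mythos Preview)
The statement you are trying to prove is labelled \textbf{Conjecture} in the paper, not Theorem; the paper gives no proof of it, and in fact explicitly presents it as a reformulation of the open Conjecture~\ref{word-PSL}. So there is no ``paper's own proof'' to compare your proposal against. What the paper does do is identify precisely the obstruction that your outline runs into.

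Your decisive step contains a concrete error. You write that the leading Lie polynomial $f(A_1,\dots,A_m)$ in the Magnus expansion, ``evaluated on generic $A_i$, \dots\ should be a non-zero nilpotent matrix.'' That is false on two levels. First, a generic element of $\sl_2(K)$ is \emph{not} nilpotent: nilpotence is the codimension-one condition $\det=0$, so for generic $A_i$ the value $f(A_1,\dots,A_m)$ is a non-nilpotent trace-zero matrix, and $I+t^d f+\cdots$ is not unipotent to leading order. Second, and more seriously, even if you abandon genericity and look for \emph{some} substitution making $f$ nonzero and nilpotent, the paper's Example~\ref{badex} (and the polynomial $[[[x,y],x],[[x,y],y]]$ from \cite{BGKP}) shows that there exist Lie polynomials on $\sl_2$ whose image contains \emph{no} nonzero nilpotent matrix whatsoever. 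The paper makes exactly this point in the paragraph following Conjecture~\ref{word-PGL}: ``it is interesting to investigate \dots\ whether it is possible that the image \dots\ does not contain nilpotent matrices. Unfortunately we saw such an example.'' So the very phenomenon that motivates the paper's study of Lie images is the reason your Magnus-expansion argument cannot close as stated: nothing prevents the leading term of $w$ from being one of these nilpotent-avoiding Lie polynomials, in which case your infinitesimal curve never enters the unipotent locus at first order, and you are thrown back onto the uncontrolled higher-order terms that you yourself flag as the hard part.
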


One can consider matrices $z_i=\frac{x_i}{\sqrt{\det x_i}}$ and note
that $w(z_1,\dots,z_m)=w(x_1,\dots,x_m)$.

For Conjecture \ref{word-PGL} we take $y_i=x_i-I$. Then we can open
the brackets in
$$w(1+y_1,1+y_2,\dots,1+y_m)=1+f(y_1,\dots,y_m)+g(y_1,\dots,y_m),$$
where $f$ is a homogeneous Lie polynomial of degree $d$, and $g$ is
the sum of terms of degree greater than $d$. Therefore it is
interesting to investigate the possible images of Lie polynomials,
whether it is possible that the image of $l$ does not contain
nilpotent matrices. Unfortunately we saw such an example
(\ref{badex}, although its degree must be at least 5 by Spenko
\cite[Lemma~7.4]{S}. More general questions about surjectivity of
word maps in groups and polynomials in algebras are considered in
\cite{KBKP}.

\begin{rem}
Our next theorem describes the situation in which the trace
vanishing polynomial does not take on nonzero nilpotent values. It
implies that any nontrivial word map $w$ evaluated on $\PSL_2$ is
not surjective iff its projection to $\ssl_2$  given by $\ssl_2:\
x\mapsto x-\frac{1}{2}\tr x$  is a multiple of any prime divisor of
$\det(\pi(w)).$ This might help in answering Conjecture
\ref{word-PSL}.
\end{rem}
\begin{thm}\label{nonilp}
Let $f(x_1,\dots,x_m)$ be a trace vanishing polynomial, evaluated on
$M_n(K[\xi ]). $ Let $\bar f = f(y_1,  \dots, y_m).$ Then $f$ takes
on no nonzero nilpotent values on any integral domain containing
$K$, iff each prime divisor $d$ of $\det(\bar f)$ also divides each
entry of~$\bar f$.
\end{thm}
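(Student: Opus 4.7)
\begin{proof-sketch}
The plan is to translate both sides of the biconditional into statements about prime ideals of the polynomial ring $K[\xi]$, reducing the problem to a divisibility argument in the UFD $K[\xi]$. I would begin by observing that any specialization $y_i \mapsto a_i \in M_n(R)$, for an integral domain $R \supseteq K$, factors through the $K$-algebra map $K[\xi] \to R$ sending each generic matrix entry to its specialization; its kernel is some prime $\mathfrak{p} \subset K[\xi]$, and $f(a_1,\dots,a_m)$ is exactly the image of $\bar f$ in $M_n(K[\xi]/\mathfrak{p})$. Conversely, every prime $\mathfrak{p}$ arises this way by taking $R = K[\xi]/\mathfrak{p}$. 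Thus the hypothesis ``no nonzero nilpotent values'' becomes: for every prime $\mathfrak{p} \subset K[\xi]$, if $\bar f \bmod \mathfrak{p}$ is nilpotent in $M_n(K[\xi]/\mathfrak{p})$ then every entry of $\bar f$ already lies in $\mathfrak{p}$.

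For the case $n=2$ that motivates the theorem, trace vanishing of $\bar f$ makes nilpotence of $\bar f \bmod \mathfrak{p}$ equivalent to the single condition $\det(\bar f) \in \mathfrak{p}$, so the problem reduces to: every prime ideal of $K[\xi]$ containing $\det(\bar f)$ contains every entry of $\bar f$. Since $K[\xi]$ is a UFD, the minimal primes over $(\det(\bar f))$ are precisely the principal ideals $(d)$ with $d$ a prime divisor of $\det(\bar f)$, and any prime containing $\det(\bar f)$ contains one of them. The $(\Leftarrow)$ direction then follows at once: if each such $d$ divides every entry of $\bar f$, any prime $\mathfrak{p}$ containing $\det(\bar f)$ contains some $d$, and hence every entry. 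For $(\Rightarrow)$ I would argue contrapositively: if some prime divisor $d$ of $\det(\bar f)$ fails to divide an entry of $\bar f$, then $\mathfrak{p} = (d)$ itself is a witness, producing a nonzero nilpotent value of $f$ in the integral domain $K[\xi]/(d)$.

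The delicate point is the equivalence ``$\bar f$ trace zero and $\det \bar f \in \mathfrak{p}$ iff $\bar f \bmod \mathfrak{p}$ is nilpotent,'' which is clean for $n=2$ but not in general. The $(\Leftarrow)$ direction uses only that nilpotence forces $\det = 0$, so it extends to arbitrary $n$ without change; $(\Rightarrow)$, however, genuinely needs $n=2$, because for $n \ge 3$ the matrix $\bar f \bmod (d)$ can have vanishing determinant without being nilpotent, and one would have to invoke vanishing of all lower coefficients of the characteristic polynomial. In the $\PSL_2$ application motivating the preceding remark, $n=2$ is precisely the setting of interest.
\end{proof-sketch}
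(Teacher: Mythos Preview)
Your argument is correct and follows the same route as the paper's own proof: both directions are handled by passing to the quotient $K[\xi]/\mathfrak{p}$ for a suitable prime $\mathfrak{p}$ (namely $(d)$ for the forward direction, and the kernel of the given specialization for the converse), using that nilpotence forces $\det(\bar f)\in\mathfrak{p}$ and hence some prime factor $d\in\mathfrak{p}$. Your formulation via prime ideals of $K[\xi]$ is just a slightly more explicit packaging of what the paper does in two sentences.

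The one point worth highlighting is that you have correctly isolated an issue the paper glosses over: in the $(\Rightarrow)$ direction the paper writes ``determinant zero and also trace zero, and thus is nilpotent,'' which is valid only for $n=2$. You are right that for $n\ge 3$ the reduction $\bar f \bmod (d)$ need not be nilpotent, so the theorem as literally stated for general $n$ is not established by the paper's argument either; the context (the preceding remark about $\PSL_2$ and the projection to $\ssl_2$) makes clear that $n=2$ is the intended case. Your observation that $(\Leftarrow)$ goes through for all $n$, while $(\Rightarrow)$ genuinely needs $n=2$ (or else control of all coefficients of the characteristic polynomial), is an accurate and useful refinement.
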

\begin{proof} $(\Rightarrow)$ If some prime divisor $d$ of $\det(\bar
f)$ does not divides $\bar f$, then $\bar f$ does not specialize to
$0$ modulo $d$. Therefore we have a nonzero matrix in the image of
$f$ which has determinant zero and also trace zero, and thus is
nilpotent, a contradiction.

 $(\Leftarrow)$  Assume that $f$ takes on a
nonzero nilpotent value  over some extension integral domain of $K$.
Thus $\det \bar f$ goes to 0 under the corresponding specialization
of the $\xi_{i,j}^k$, so some prime divisor $d$ of $\det(\bar f)$
goes to $0$, and  $\bar f$ is not divisible by~$d$.
\end{proof}

\end{document}